\def\Dj{\hbox{D\kern-.73em\raise.30ex\hbox{-}
		\raise-.30ex\hbox{}}}
\def\dj{\hbox{d\kern-.33em\raise.80ex\hbox{-}
		\raise-.80ex\hbox{\kern-.40em}}}
\newtheorem{theorem}{Theorem}[section]
\newtheorem{lemma}[theorem]{Lemma}
\newtheorem{corollary}[theorem]{Corollary}
\newtheorem{remark}[theorem]{Remark}
\newtheorem{definition}[theorem]{Definition}
\newtheorem{proposition}[theorem]{Proposition}
\begin{document}

	\baselineskip=0.30in

	\begin{center}
		{\Large \bf 
			On a novel graph associated with the circular space and its properties}
		
		\vspace{6mm}
		
		{\large \bf Sezer Sorgun $^a$\, Ali Gökhan Ertaş $^b$, İbrahim Günaltılı$^c$ }
		
		\vspace{9mm}
		
		\baselineskip=0.20in
		
		$^a${\it Department of Mathematics,\\
			Nevsehir Hac{\i} Bekta\c{s} Veli University, \\
			Nevsehir 50300, Türkiye.\/} \\
		$^b${\it Department of Informatics,\\
			Kütahya Dumlupınar  University, \\
			Kütahya, 43020, Türkiye.\/} \\
		$^c${\it Department of Mathematics,\\
			Eskişehir Osmangazi  University, \\
			Eskişehir, 26140, Türkiye.\/} \\
		e-mail: {\tt srgnrzs@gmail.com; aligokhanertas@gmail.com; igunalti@ogu.edu.tr}\\[3mm]

		\vspace{6mm}
		
		(Received 19 03, 2023)
		
	\end{center}
	
	\vspace{6mm}
	
	\baselineskip=0.23in

	\begin{abstract}
		
	A configuration of the triple $(\mathcal{P}, \mathcal{L}, \mathcal{I})$ on the incidence  relation which holds the properties of "Any two points are incident with at most one line" and "Any two lines are incident with at most one point". In projective geometry, bipartite graphs can be used as an incidence model between the points and the lines of a configuration. The graphs associated with a space are a good tool for understanding the topological and geometric properties of the space. in abstract systems. In this paper, we define a novel graph associated with circular space and obtain its properties in terms of some pure-graph invariants. Also, we characterize it regarding the graph associated with other spaces in the literature.  
		
		\bigskip
		
		\noindent
		{\bf Key Words:} Bipartite Graphs, Configuration, Circular space \\
		\\
		{\bf 2020 Mathematics Subject Classification:} 05C72; 05C10
	\end{abstract}
	
	\vspace{5mm}
	
	\baselineskip=0.30in
	
	\section{Introduction}
	
	Let $G=(V,E)$ be a simple graph. If vertices $v_{i}$ and $v_{j}$ are adjacent, we denote that by $v_{i}v_{j}\in E(G)$ or $v_i\sim v_j$. The $distance$ between the vertices $u,v \in V(G)$, denoted by $d(u,v)$, is the minimum length of the paths between $u$ and $v$. The \textit{diameter} $diam(G)$ of $G$ is the maximum eccentricity among its vertices and the \textit{radius} $rad(G)$ is the minimum eccentricity of its vertices. Let $N_{v_i}$ be the neighbor set of a vertex $v_{i}$ in $ V(G)$. Throughout the paper, the common neighbour  of the vertices $v_1,v_2,\ldots v_k$ is denoted as $CN(v_1,\ldots, v_k)$ and $\vert CN(v_1,\ldots, v_k)\vert = cn(v_1,\ldots, v_k)$. A graph is a bipartite graph whose vertex set forms into two disjoint sets such that no two graph vertices within the same set are adjacent. 
	
	An incidence structure is a  $(\mathcal{P}, \mathcal{L}, \sigma)$ triple where $\mathcal{P}$ is a set whose elements are called points, $\mathcal{L}$ is a distinct set whose elements are called lines and $\sigma \subseteq \mathcal{P} \times \mathcal{L}$ is the incidence relation. For $(\mathcal{P}, \mathcal{L}, \sigma)$ triple, $G$ is a bipartite graph with the vertex set $V=\mathcal{P}\cup \mathcal{L}$ and edge set $E=\{\{p,l\}:p\sigma\mathcal{P}\}$ and it is first called incidence graph.  It is also known as   Levi graph \cite{levi1942finite}. In general, the Levi graph $G(\pi)$ of a plane $\pi$ is a bipartite incidence graph with $x,y$ forming an edge in the graph if and only if the point $x$ is on the line $y$. \\
	By motivation of Levi graphs, Hauschild et. al. \cite{hauschild2015levi} give a configuration of  $(\mathcal{P}, \mathcal{L}, \sigma)$ triple on the incidence  relation which holds the properties of "Any two points are incident with at most one line" and "Any two lines are incident with at most one point". In projective geometry, bipartite graphs can be used as incidence models between the points and the lines of a configuration. So the Levi graph of configuration $(\mathcal{P}, \mathcal{L}, \sigma)$ is the bipartite graph $G$ with $V(G)=\mathcal{P} \cup \mathcal{L}$ and $p \in \mathcal{P}$ is adjacent to $l\in \mathcal{L}$ if and only if $p \sigma l$ \cite{hauschild2015levi}. 
	
	 The neighborhood graph $\mathcal{N}(G)$ of a graph $G=(V, E)$ is a graph with vertex set $V\cup W$ where $W$ is the set of all open neighborhood sets of $G$ and with two vertices $u,w\in V\cup W$ adjacent if $u\in V$ and $w$ is in an open neighborhood set containing $u$ \cite{KulliVR2015}. Kulli \cite{KulliVR2015} gives some characterizations of $\mathcal{N}(G)$ for the extremal graph $G$. In \cite{konuralpjournalmath701085}, it has been defined the linear graph (also bipartite graph) with the help of linear spaces and obtained some results on the incidence graph (aka Levi graph) of the linear graphs. A linear graph is a bipartite graph with parts $\mathcal{P}$  and $\mathcal{L}$ satisfying two  conditions: " For all $p, q \in \mathcal{P}$ such that $p\neq q$, $cn(p,q)=1$" and $\delta(G)\ge2$.
	
	In this paper,  we obtained a novel bipartite graph whose name is circular graph associated the circular space given in  \cite{gunaltili2004finite,gunaltili2006finite}. We  give the properties of the circular graph with respect to graph invariants such as diameter, degree, etc. and characterize the circular graph regarding graphs associated with the other spaces.  

	\vspace*{3mm}
	
	\section{Main Results}
	\begin{definition} \cite{gunaltili2006finite} Let $P$ be a set of points, $C$ be a set of certain distinguished subsets of points called circles and $o\subseteq P \times C$. The incidence structure $C = (P, C, o)$ is called
		a circular space if:\\
		C1. Every circle contains at least three distinct points.\\
		C2. Any three distinct points are contained in exactly one circle.
	\end{definition}
	
	By the arguments in  Definition 2.1, we are ready for the novel graph whose name is a circular graph.
	
	\begin{definition} \label{A} Let $ G = (U\cup W,E) $ be any finite bipartite graph. $G$ is called a circular graph satisfying the following conditions
		\begin{itemize}
			\item[i.)]  $cn(u_i,u_j,u_k)=1$ for all $  u_i,u_j,u_k\in U $.
			\item[ii.)]  $d(w) \geq  3$, for all $w\in W $.
		\end{itemize}
		If $|U|=1$ or $|W|=1$ then $G$ is called a trivial circular graph. In this case, it is easy to see that $G \cong K_{1,n-1}$.
	\end{definition}
	
	\begin{figure}
		\centering
		\includegraphics[width=\textwidth]{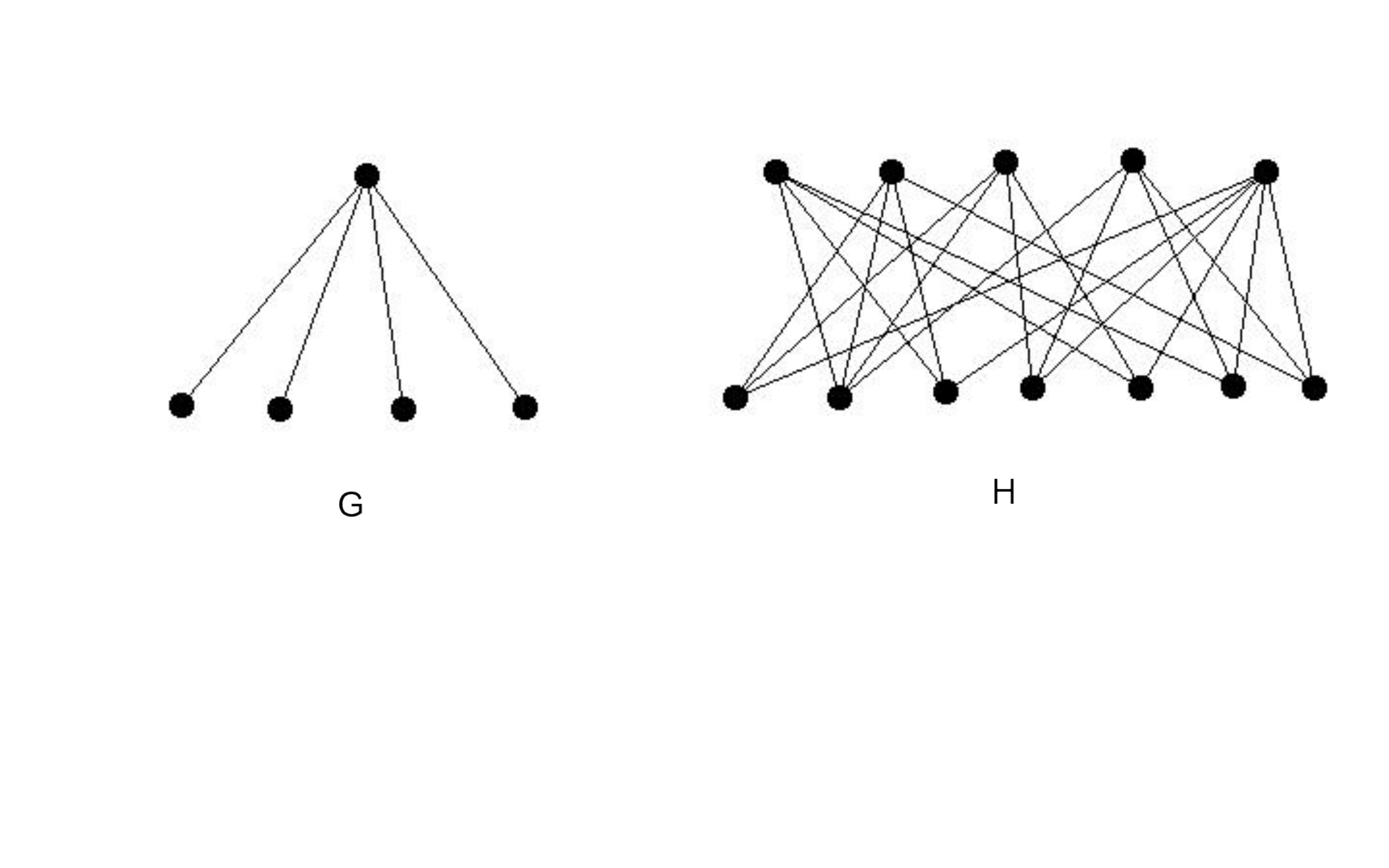}
		\caption{The samples of a trivial ($G$) and a non-trivial ($H$) circular graph}
	\end{figure}
	
	\begin{lemma} \label{B}
		Let $G = (U\cup W, E)$ be a circular graph. Then 
		\begin{equation*}
			cn(w_1,w_2)\leq 2
		\end{equation*}
		for every $w_1,w_2 \in  W$. 
	\end{lemma}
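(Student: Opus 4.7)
The plan is to argue by contradiction, using condition (i) of Definition \ref{A} as the only engine. The key observation is that the statement $cn(w_1,w_2)\le 2$ is, up to swapping the roles of the two sides of the bipartition, a direct dual of condition (i); so the proof should be a short extraction of this duality.

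First I would fix $w_1,w_2\in W$ and suppose for contradiction that $cn(w_1,w_2)\ge 3$. By definition of common neighbourhood, this yields three pairwise distinct vertices $u_1,u_2,u_3\in U$ each of which is adjacent to both $w_1$ and $w_2$. Since $G$ is bipartite with parts $U$ and $W$, the neighbours of any vertex in $U$ lie in $W$, so $\{w_1,w_2\}\subseteq N_{u_i}$ for $i=1,2,3$.

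Next I would translate this into a statement about $cn(u_1,u_2,u_3)$: from the inclusions above, both $w_1$ and $w_2$ belong to $N_{u_1}\cap N_{u_2}\cap N_{u_3}=CN(u_1,u_2,u_3)$. Hence $cn(u_1,u_2,u_3)\ge 2$, which directly contradicts condition (i) of Definition \ref{A}, namely that $cn(u_i,u_j,u_k)=1$ for any three vertices of $U$. The contradiction forces $cn(w_1,w_2)\le 2$.

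I do not anticipate a genuine obstacle: condition (ii) (degree $\ge 3$ on the $W$-side) plays no role here, and the trivial case $|U|=1$ or $|W|=1$ is either vacuous or immediate from the description $G\cong K_{1,n-1}$. The only mild point of care is to make explicit that distinct common neighbours of $w_1,w_2$ automatically live in $U$ (because of bipartiteness), so that the three candidate vertices are legitimate inputs to the axiom $cn(u_i,u_j,u_k)=1$.
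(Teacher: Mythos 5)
Your proposal is correct and follows exactly the paper's own argument: assume $cn(w_1,w_2)\ge 3$, extract three common neighbours $u_1,u_2,u_3\in U$, observe that $w_1,w_2$ then both lie in $CN(u_1,u_2,u_3)$, and contradict condition (i) of Definition \ref{A}. The extra care you take about bipartiteness placing the common neighbours in $U$ is a minor tightening the paper leaves implicit.
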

	
	\begin{proof}
		Let $G= (U\cup W, E)$ be a circular graph. Suppose that $\vert N(w_1)\cap N(w_2)\vert \geq 3$. Then there are  $(u_i,u_j,u_k)$ triple in the partition $U$ such that $\{u_i,u_j,u_k \} \subseteq CN(w_1, w_2)$. Hence, $cn(u_i,u_j,u_k) \geq 2$. But this is a contradiction because $cn(u_i,u_j,u_k )= 1$. Therefore we get $cn (w_1,w_2) \leq 2$.
	\end{proof}
	
	\begin{theorem}
		Let $\mathcal{T}$ be a family of trees. Then $T\in \mathcal{T}$ is a circular tree iff $T\cong K_{1,n-1}$.
	\end{theorem}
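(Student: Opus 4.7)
The plan is to prove both directions. The reverse implication is immediate: a star $K_{1,n-1}$ has a bipartition in which one side is a singleton, so by the trivial case clause of Definition \ref{A} it is (trivially) a circular graph. All the work is in the forward direction.

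For the forward direction, I would assume $T$ is a circular tree with bipartition $U \cup W$ and try to rule out the non-trivial case $|U|,|W|\ge 2$. First I would knock out the small partition sizes: if $|U|\le 2$, then no $w\in W$ can satisfy $d(w)\ge 3$, forcing $W=\emptyset$, which contradicts $|W|\ge 2$. So I may assume $|U|\ge 3$ and $|W|\ge 2$.

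Now the central observation, which is stronger than Lemma \ref{B} because we are in a tree, is that any two vertices $w_1,w_2\in W$ satisfy $|N(w_1)\cap N(w_2)|\le 1$. Indeed, if they shared two common neighbors $u,u'\in U$, then $w_1\,u\,w_2\,u'\,w_1$ would be a $4$-cycle, impossible in a tree. Since $|N(w_1)|\ge 3$ and $|N(w_2)|\ge 3$, this gives $|N(w_1)\setminus N(w_2)|\ge 2$ and $|N(w_2)\setminus N(w_1)|\ge 2$. Pick any $u\in N(w_1)\setminus N(w_2)$ and two distinct vertices $v_1,v_2\in N(w_2)\setminus N(w_1)$.

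The finishing move is to apply condition (i) of Definition \ref{A} to the triple $\{u,v_1,v_2\}\subseteq U$: there is a unique $w^*\in W$ with $\{u,v_1,v_2\}\subseteq N(w^*)$. Since $w_1\not\sim v_1$ and $w_2\not\sim u$, we have $w^*\notin\{w_1,w_2\}$. But now $w_2$ and $w^*$ share the two common neighbors $v_1$ and $v_2$, producing the $4$-cycle $w_2\,v_1\,w^*\,v_2\,w_2$ in $T$, a contradiction. Hence $|W|\le 1$ (or symmetrically $|U|\le 1$), so $T$ is trivial and therefore $T\cong K_{1,n-1}$. I expect the main subtlety to be bookkeeping the cardinalities needed to guarantee that $u,v_1,v_2$ can be chosen distinctly with the stated incidences; once that is pinned down, the cycle argument closes the proof cleanly.
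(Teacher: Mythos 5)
Your proof is correct, and it follows the same basic strategy as the paper's: rule out $|W|\ge 2$ by exhibiting a $4$-cycle in the tree, manufactured by applying condition (i) of Definition \ref{A} to a suitable triple in $U$. The differences are organizational but worth noting. The paper splits into cases according to $cn(w_1,w_2)\in\{2,1,0\}$: the case $cn=2$ gives an immediate $4$-cycle (your first observation), the case $cn=1$ is dispatched with the bare assertion that ``$T$ also contains at least one cycle of length $4$'' (your construction of $w^*$ via the triple $\{u,v_1,v_2\}$ is exactly the missing justification), and the case $cn=0$ is handled by a separate connectivity argument forcing $W$ to be a singleton. Your version collapses the last two cases into a single uniform argument --- choosing $u\in N(w_1)\setminus N(w_2)$ and $v_1,v_2\in N(w_2)\setminus N(w_1)$ works whether the common neighborhood is empty or a singleton --- and it also makes explicit the cardinality bookkeeping ($|U|\le 2$ forces $W=\emptyset$; $cn(w_1,w_2)\le 1$ plus $d(w_i)\ge 3$ guarantees the three distinct vertices exist). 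So your write-up is, if anything, tighter and more complete than the paper's at the two places where the paper is terse; there is no gap.
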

	
	\begin{proof}
		Let $T$ be any tree of order $n$ in $\mathcal{T}$. Since $T$  is also a bipartite graph, we consider as the vertex set $V=U\cup W$ where $U= \{u_1,\ldots, u_t\}$ and  $W= \{w_{1},\ldots,w_{s}\}$ such that $t+s=n$. Now consider $T$ as a circular tree. Then we have $cn(u_i,u_j,u_k)=1$ and  $d(w_i)\geq 3$. From Lemma \ref{B}, we also know that $cn(w_i,w_j ) \leq 2$  for $w_i,w_j\in W$. Assume that $cn(w_i,w_j)=2$. Then, we have $\{u_1,u_2,u_3\} \subseteq N(w_i)$ and  $\{u_2,u_3,u_4 \} \subseteq N(w_j)$  for $u_i\in U$ ($1\leq u_i\leq 4$). So we get $\{u_2,u_3 \} \subseteq CN(w_i,w_j)$ and hence  $T$  contains $w_i-u_2-w_j-u_3-w_i$ cycle but this is a contradiction because $T$ is tree. So $cn(w_1,w_2)\leq 1$.\\
		Now let $cn(w_i,w_j)=1$. Similarly, we have $\{u_3 \} \subseteq CN(w_i,w_j)$  such that $\{u_1,u_2,u_3 \} \subseteq N(w_i)$ and $\{u_3,u_4,u_5\} \subseteq N(w_j)$. In this case, $T$ also contains at least one cycle of length 4. But, the fact that T is a tree creates a contradiction. Hence, we get $cn(w_i,w_j )=0$. So, since any triple of $(u_i,u_j,u_k)$ has exactly one common neighbor in $W$ and  $cn(w_i,w_j )=0$, also $T$ cannot be disconnected graph, $W$ must be a singleton partition. That's $U=\{u_1,\ldots, u_{n-1}\}$ and $W=\{w_1\}$. Therefore $T\cong K_{1,n-1}$.
		    \vspace{3mm}
		Conversely, if $T\cong K_{1,n-1}$, it is clear that $T$ is a circular tree (trivial) from Definition \ref{A}.
	\end{proof}

	\begin{lemma} \label{C}
		Let $G = (U\cup W, E)$ be a non-trivial circular graph. Then for all $x\in U$, $d(x)\geq3$.  
	\end{lemma}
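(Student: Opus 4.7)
The strategy is to argue by contradiction: assume some $x \in U$ has degree at most $2$, and combine the triple-covering axiom (i) with Lemma \ref{B} to derive a contradiction. A preliminary observation is that non-triviality already forces $|U| \geq 3$, since every $w \in W$ has all its neighbors in $U$ and satisfies $d(w) \geq 3$ by (ii); in particular triples in $U$ exist and condition (i) is non-vacuous.

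The heart of the argument is the following pair-covering reformulation. Fix $x \in U$ and write $N(x) = \{w_1, \dots, w_k\}$ with $k = d(x)$. For any two distinct $y, z \in U \setminus \{x\}$, the unique common neighbor of the triple $\{x, y, z\}$ provided by (i) is adjacent to $x$, hence lies in $N(x)$. Consequently every two-element subset of $U \setminus \{x\}$ is contained in $N(w_i) \cap U$ for some $i$, and in particular $U \setminus \{x\} = \bigcup_{i=1}^{k} (N(w_i) \setminus \{x\})$.

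I would then split on $k$. If $k = 0$, no triple through $x$ admits a common neighbor, directly contradicting (i). If $k = 1$, the above forces $U \subseteq N(w_1)$; picking any $w_2 \in W \setminus \{w_1\}$, which exists by non-triviality, gives $N(w_2) \subseteq U \subseteq N(w_1)$, so $cn(w_1, w_2) = d(w_2) \geq 3$, contradicting Lemma \ref{B}. The decisive case is $k = 2$, with $N(x) = \{w_1, w_2\}$: setting $A = N(w_1) \setminus \{x\}$ and $B = N(w_2) \setminus \{x\}$, the pair-covering property yields $A \cup B = U \setminus \{x\}$, and the existence of $a \in A \setminus B$ together with $b \in B \setminus A$ would leave the pair $\{a,b\}$ uncovered. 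Hence $A \subseteq B$ or $B \subseteq A$, which lifts to $N(w_1) \subseteq N(w_2)$ or $N(w_2) \subseteq N(w_1)$, and therefore $cn(w_1, w_2) \geq \min(d(w_1), d(w_2)) \geq 3$, again contradicting Lemma \ref{B}.

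The main obstacle I anticipate is the $k = 2$ analysis: one must carefully convert the pair-covering condition into an honest containment between $N(w_1)$ and $N(w_2)$ so that Lemma \ref{B} can be applied. Once that step is in place, the smaller cases reduce to essentially the same two-line argument.
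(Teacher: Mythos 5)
Your proof is correct, but it takes a genuinely different route from the paper's. The paper argues directly: for the given $x$ it picks a vertex $y\in W$ with $x\notin N(y)$, takes three neighbors $u_i,u_j,u_k$ of $y$ (possible since $d(y)\ge 3$), and applies condition (i) of Definition \ref{A} to the three triples $\{x,u_i,u_j\}$, $\{x,u_i,u_k\}$, $\{x,u_j,u_k\}$; the resulting common neighbors $w_1,w_2,w_3$ lie in $N(x)$ and are pairwise distinct, since a coincidence such as $w_1=w_2$ would give $u_i,u_j,u_k$ the two common neighbors $y$ and $w_1$, violating (i). So the paper exhibits three explicit neighbors of $x$ and never invokes Lemma \ref{B}, whereas you argue by contradiction on $d(x)\le 2$, convert axiom (i) into the pair-covering statement that every pair of $U\setminus\{x\}$ lies in some $N(w)$ with $w\in N(x)$, and in each case force a containment $N(w_1)\subseteq N(w_2)$ that contradicts Lemma \ref{B}. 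The paper's argument is shorter and constructive, but it silently assumes that a $y\in W$ non-adjacent to $x$ exists (if $x$ were adjacent to all of $W$ one would still need a separate argument, e.g.\ ruling out $|W|=2$); your case analysis avoids that assumption entirely and is self-contained given non-triviality, at the cost of being longer and relying on Lemma \ref{B}. Both proofs are valid; yours closes a small gap the paper leaves open.
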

	
	\begin{proof}
		Let $G = (U\cup W, E)$ non-trivial circular graph. Then there is at least $x\in U$ for $y\in W$ such that $x\not \in N(y)$. From  Definition \ref{A} (i),  for $(u_i,u_j,u_k)$ triple in $U$ we get $\{u_i,u_j,u_k\} \subseteq N(y)$ because $d(y)\geq 3$. Since  $x \not \in N(y)$, so we get $x\neq u_i,u_j,u_k$. By Definition \ref{A} (ii.), there are $w_1,w_2,w_3$ vertices in $W$ such that $CN(x, u_i,u_j)=\{w_1\}$, $CN(x, u_i,u_k)=\{w_2\}$ and $CN(x, u_j,u_k)=\{w_3\}$.\\
		Here since  $y\not \in N(x)$, then $y \neq w_1,(w_2,w_3)$. If $w_1=w_2$ , then $cn (u_i,u_j,u_k) \geq 2$. But this is a contradiction to Definition \ref{A} (i.). Hence, $w_1 \neq w_2$. Similarly, we have $w_1 \neq w_3$ and $w_2  \neq w_3$. So, we get $d(x) \geq 3$ since $\{w_1,w_2,w_3\} \subseteq N(x)$.
	\end{proof}

	\begin{theorem}
		Let  $G = (U\cup W, E)$  be a non-trivial circular graph. Then the following holds 
		\item[i.)] 	$d(u_1,u_2)=2$ for any $u_1,u_2 \in U$. 
		\item[ii.)]	$d(w_1,w_2 )=2$  or $d(w_1,w_2 )=4$ for any $w_1,w_2 \in W $. 
		\item[iii.)] $d(u_1,w_1 )  \in  \{ 1,3 \} $ for any $u_1 \in U $ and  $ w_1 \in W$.
	\end{theorem}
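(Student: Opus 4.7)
The plan is to exploit bipartiteness for the parity of each distance and then use the two defining conditions of Definition \ref{A}, together with Lemma \ref{C}, to produce the necessary short paths. A preliminary observation I would state first is that non-triviality forces $|U|\geq 3$: since every $w\in W$ has $d(w)\geq 3$, the partition $U$ must contain at least three vertices. This is what makes the triple condition $cn(u_i,u_j,u_k)=1$ usable below.

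For part (i), bipartiteness forces $d(u_1,u_2)$ to be even, so $d(u_1,u_2)\geq 2$. Choose any $u_3\in U\setminus\{u_1,u_2\}$, which exists by the preliminary observation. By Definition \ref{A}(i), $cn(u_1,u_2,u_3)=1$, so there is $w\in W$ with $\{u_1,u_2,u_3\}\subseteq N(w)$; in particular $u_1-w-u_2$ is a path of length $2$, giving $d(u_1,u_2)=2$.

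For part (iii), bipartiteness forces $d(u_1,w_1)$ to be odd. If $u_1\sim w_1$, then $d(u_1,w_1)=1$. Otherwise I would build a path of length $3$ as follows. Since $d(w_1)\geq 3$ by Definition \ref{A}(ii), pick two distinct vertices $u_a,u_b\in N(w_1)$; both differ from $u_1$, because $u_1\notin N(w_1)$. Apply Definition \ref{A}(i) to the triple $(u_1,u_a,u_b)$ to obtain a unique $w\in W$ with $\{u_1,u_a,u_b\}\subseteq N(w)$. Then $u_1-w-u_a-w_1$ is a path of length $3$, forcing $d(u_1,w_1)\leq 3$, hence $d(u_1,w_1)\in\{1,3\}$.

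For part (ii), bipartiteness forces $d(w_1,w_2)$ to be even. If $N(w_1)\cap N(w_2)\neq\emptyset$, then $d(w_1,w_2)=2$. Otherwise, pick any $u\in N(w_1)$ (which exists since $d(w_1)\geq 3$). Since $u$ is not a common neighbor of $w_1,w_2$, we have $u\not\sim w_2$, and part (iii) already proved gives $d(u,w_2)=3$. Concatenating with the edge $w_1u$ yields $d(w_1,w_2)\leq 1+3=4$. Combined with the even parity and the assumption of no common neighbor, this gives $d(w_1,w_2)=4$. The main obstacle I anticipate is purely bookkeeping: ensuring at each step that the vertices selected from $U$ or $W$ are distinct from the ones already fixed, so that Definition \ref{A}(i) can be legitimately invoked on a genuine triple; the observation $|U|\geq 3$ above and the negation $u_1\notin N(w_1)$ in part (iii) are exactly what make these distinctness checks go through.
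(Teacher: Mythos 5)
Your proposal is correct and follows essentially the same route as the paper: bipartite parity fixes the possible distances, and the condition $cn(u_i,u_j,u_k)=1$ supplies the explicit paths of length $2$, $3$, and $4$. The only differences are cosmetic improvements — you justify $|U|\geq 3$ explicitly (the paper uses this tacitly) and in part (ii) you obtain the bound $d(w_1,w_2)\leq 4$ by reusing part (iii) rather than writing out the length-$4$ path $w_1-u-w-u'-w_2$ directly, which is the same path in disguise.
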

	
	\begin{proof}
		\item[i.)] Since $G$ is bipartite graph, $u_1\not\sim u_2$ for $u_1,u_2\in U$. Then  we have $d(u_1,u_2 ) \neq 1$ and there is $w\in W$ such that $CN(u_1,u_2,u_3 )= \{ w \}$ for every $u_3 \in U$  since $ cn(u_1,u_2,u_3 )=1$ in Definition \ref{A}. Hence the path between $u_1$ and $u_2$ must be the path of $u_1-w-u_2$, so we get  $ d(u_1,u_2 )=2 $.
		\item[ii.)] Similarly, $d(w_1,w_2 ) \neq 1 $ for $w_1,w_2 \in W$ since  $G$ is a bipartite graph. Also by Lemma \ref{B}, we have  $cn(w_1,w_2)\leq 2$.\\
		Case 1:  Let  $cn(w_1,w_2 )=0$. \\
		Let  $w_1,w_2\in W$. Then we have $d(w_1 ) \geq 3$ and $d(w_2 ) \geq 3$. Also there is  $u_i\in U$ ($1\leq i\leq 6$) such that $\{u_1,u_2,u_3 \} \subseteq N(w_1)$  and $ \{ u_4,u_5,u_6 \} \subseteq N(w_2)$. Since   $cn(u_1,u_3,u_4 )=1$ and hence  there is $w_3\in W$ such that $CN(u_1,u_3,u_4 )= \{w_3\}$, $w_1-u_3-w_3-u_4-w_2$ path is one of the shortest paths between $w_1$ and $w_2$. So we get $d(w_1,w_2 )=4$.\\
		\vspace{2mm}
		Case2: Let $1\leq cn(w_1,w_2)\leq 2$. In this case, there is at least one vertex  $u_1$ in $U$ such that $\{u_1\}\subseteq CN(w_1,w_2 )$. Hence we get $d(w_1,w_2 )=2$.\\
		\vspace{2mm}
		\item[iii.)]  If $u_1 \in N(w_1)$ for  $u_1\in U$ and $w_1 \in W$, $d(u_1,w_1)=1$. If $u_1 \notin N(w_1)$, then there are the vertices $u_2,u_3,u_4$ in $U$  such that $u_2,u_3,u_4 \in N(w_1)$. Since  $cn(u_1,u_2,u_3 )=1$ , we have $w_2 \in W$ such that $CN(u_1,u_2,u_3 )= \{w_2 \} $. Hence the path of $u_1-w_2-u_3-w_1$ is one of the shortest path between the vertices $u_1$ and $w_1$. Therefore $d(u_1,w_1 )=3$.
	\end{proof}
	
	\begin{corollary}
		Let $G = (U\cup W, E)$ be a circular graph.
		\item[i.)]	If $G$ is trivial circular graph, then $diam(G)=2$ and $rad(G)=1$ 
		\item[ii.)]	If $G$ is non-trivial circular graph, then $3\leq diam(G)\leq 4$ and $rad(G)=3$ . 
	\end{corollary}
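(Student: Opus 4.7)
The plan is to handle the two cases separately. For the trivial part, since $G\cong K_{1,n-1}$ by Definition \ref{A}, the central vertex is adjacent to every leaf and any two leaves are joined by the unique length-$2$ path through the center. Thus the center has eccentricity $1$ and every leaf has eccentricity $2$, yielding $rad(G)=1$ and $diam(G)=2$ immediately.

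For the non-trivial part, I would build on the previous theorem, which already pins down every pairwise distance: $d(u_1,u_2)=2$ inside $U$, $d(w_1,w_2)\in\{2,4\}$ inside $W$, and $d(u_1,w_1)\in\{1,3\}$ across. This immediately gives $diam(G)\leq 4$, and shows that the eccentricity of any $u\in U$ is at most $3$. What remains is to supply matching lower bounds for the radius and diameter, which reduce to a single claim: \emph{no vertex is adjacent to every vertex in the opposite part}.

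The key observation is that, since within each part the distance is already $\geq 2$, a vertex $x$ has eccentricity $\leq 2$ only if $x$ is joined to every vertex of the opposite part. I would rule this out separately on each side. For a line $w$ with $N(w)=U$, any second line $w'\in W$ would satisfy $N(w')\subseteq N(w)$, and $d(w')\geq 3$ then forces $cn(w,w')\geq 3$, contradicting Lemma \ref{B}. For a point $u$ supposedly lying on every line, I would first note that the non-trivial hypothesis forces $|U|\geq 4$ (otherwise $|U|\leq 3$ combined with $d(w)\geq 3$ collapses the graph back to the trivial setup with $|W|=1$). Then, picking four further points $a,b,c,d$ in $U$ and comparing the unique lines through $\{a,b,c\}$ and through $\{a,b,d\}$, either these coincide — in which case iterating over a fourth point produces a line $w^*$ with $N(w^*)=U$, reducing to the previous case — or they differ, and $\{u,a,b\}$ sits in both neighborhoods, again giving $cn\geq 3$ and contradicting Lemma \ref{B}.

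Once both non-adjacency claims are in hand, part (iii) of the preceding theorem pushes the distance from each vertex to its separating witness up to exactly $3$, so every vertex has eccentricity $\geq 3$. Combined with the upper bounds, this yields $rad(G)=3$ and $diam(G)\geq 3$, completing the corollary. The step I expect to be the main obstacle is the point-side claim — excluding a universal point lying on all lines — because it requires first establishing $|U|\geq 4$ in the non-trivial setting and then running a short case analysis that invokes Lemma \ref{B} twice.
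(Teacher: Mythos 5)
Your proof is correct, but it is worth noting that the paper offers no proof of this corollary at all: it is stated as an immediate consequence of the preceding distance theorem. Read that way, the paper's version actually has a gap that you correctly identify and fill. The theorem gives $d(u,u')=2$, $d(w,w')\in\{2,4\}$ and $d(u,w)\in\{1,3\}$, which yields the upper bounds $diam(G)\le 4$ and $ecc(u)\le 3$, but it does not by itself exclude a vertex of eccentricity $2$ — i.e.\ a line incident with every point or a point incident with every line — and without excluding these one cannot conclude $rad(G)=3$ or $diam(G)\ge 3$. Your two non-adjacency claims supply exactly the missing lower bounds: the line-side argument ($N(w)=U$ forces $cn(w,w')=d(w')\ge 3$, contradicting Lemma \ref{B}) is clean, and the point-side argument (a universal point makes the lines through $\{a,b,c\}$ and $\{a,b,d\}$ share $\{u,a,b\}$, so they either coincide into a universal line or violate Lemma \ref{B}) also works. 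Two small points of precision: your phrase ``four further points $a,b,c,d$'' suggests $|U|\ge 5$, whereas $|U|\ge 4$ suffices — when $|U|=4$ the ``coincide'' branch applies directly and produces the universal line; and your reduction showing $|U|\le 3$ forces triviality is right ($|U|=3$ with $d(w)\ge 3$ makes every line contain all three points, so $cn(u_1,u_2,u_3)=|W|=1$). In short, your route is the paper's intended one plus the lower-bound analysis the paper silently omits, and that addition is necessary for the statement to be a theorem rather than only half of one.
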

	
	\begin{proposition}
		Let $G=(U\cup W,E)$ be a circular graph. Then 
		$ \mathcal{N}(G)\cong G\cup G$
	\end{proposition}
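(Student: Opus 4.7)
The plan is to exhibit $\mathcal{N}(G)$ as the vertex-disjoint union of two induced subgraphs, each of which is isomorphic to $G$ via the natural assignment $v \mapsto v$ on one side and $v \mapsto N(v)$ on the other. I focus on the non-trivial case: for trivial $G \cong K_{1,n-1}$ with $n \geq 3$ all leaf neighborhoods collapse to a single neighborhood-vertex, so the statement really concerns non-trivial circular graphs.

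The crucial preliminary is that the map $v \mapsto N(v)$ on $V(G)$ is injective, so that the neighborhood-vertex set of $\mathcal{N}(G)$ has cardinality exactly $|V(G)|$. For $w_1 \neq w_2$ in $W$, equality $N(w_1) = N(w_2)$ would give $cn(w_1,w_2) = d(w_1) \geq 3$, contradicting Lemma \ref{B}. The cross-equality $N(u) = N(w)$ for $u \in U$, $w \in W$ is impossible since $N(u) \subseteq W$ and $N(w) \subseteq U$ are nonempty subsets of disjoint sets. The delicate case is $u_1 \neq u_2$ in $U$ with common neighborhood $S = N(u_1) = N(u_2)$, necessarily $|S| \geq 3$ by Lemma \ref{C}. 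For any $w_a, w_b \in S$, Lemma \ref{B} combined with $\{u_1,u_2\} \subseteq N(w_a) \cap N(w_b)$ forces $N(w_a) \cap N(w_b) = \{u_1, u_2\}$. Since $d(w_a) \geq 3$, there exists $u_3 \in N(w_a) \setminus \{u_1,u_2\}$, and the preceding identity forces $u_3 \notin N(w_b)$ for every $w_b \in S \setminus \{w_a\}$, i.e.\ $w_a$ is the unique neighbor of $u_3$ in $S$. Pushing this observation through the triple condition $cn(u_1, u_3, u_i) = 1$ for other vertices $u_i \in U \setminus \{u_1, u_2, u_3\}$ (using Lemma \ref{C} both to supply such $u_i$ and to bound degrees in the extremal small-$|U|$ subcase) eventually drives either some $w \in S$ to degree $\leq 2$, violating Definition \ref{A}(ii), or $cn(w_a, w^*) \geq 3$ for some $w^* \in W \setminus S$ reached as a common external neighbor, violating Lemma \ref{B}.

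With this distinctness in hand, the bipartite structure of $G$ finishes the argument quickly. Since $u \in U$ is adjacent to $N(v)$ in $\mathcal{N}(G)$ exactly when $u \in N(v)$, i.e.\ when $v \in N_G(u) \subseteq W$, the neighbors of $U$ in $\mathcal{N}(G)$ lie entirely in $\{N(w) : w \in W\}$, and symmetrically the neighbors of $W$ lie in $\{N(u) : u \in U\}$. Hence $\mathcal{N}(G)$ decomposes as the vertex-disjoint union $H_1 \cup H_2$, where $H_1$ is induced on $U \cup \{N(w) : w \in W\}$ and $H_2$ on $W \cup \{N(u) : u \in U\}$. The maps $\phi_1: V(G) \to V(H_1)$ given by $u \mapsto u$, $w \mapsto N(w)$, and $\phi_2: V(G) \to V(H_2)$ given by $w \mapsto w$, $u \mapsto N(u)$, are bijections by the injectivity step, and the equivalences $u \sim_{\mathcal{N}(G)} N(w) \Leftrightarrow u \in N(w) \Leftrightarrow u \sim_G w$ show that each preserves adjacencies in both directions. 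This yields $H_1 \cong H_2 \cong G$ and hence $\mathcal{N}(G) \cong G \cup G$. The main technical obstacle is the injectivity step restricted to $U$; once it is secured, the decomposition and edge-verifications are essentially formal.
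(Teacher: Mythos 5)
Your overall route is the same as the paper's: split $\mathcal{N}(G)$ into the two induced pieces on $U\cup\{N(w):w\in W\}$ and $W\cup\{N(u):u\in U\}$ (the paper's $G_1$ and $G_2$), observe there are no edges between them because $G$ is bipartite, and identify each piece with $G$ via $v\mapsto v$ on one side and $v\mapsto N(v)$ on the other. What you add, and the paper omits entirely, is the injectivity of $v\mapsto N(v)$: the paper simply declares that $\mathcal{N}(G)$ has order $2(i+j)$, which under Kulli's definition (the second part is the \emph{set} of open neighborhoods) already presupposes all neighborhoods distinct. Your caveat about the trivial case is well taken --- for $K_{1,n-1}$ all leaves share the neighborhood $\{w\}$, and under the set reading $\mathcal{N}(K_{1,n-1})\cong K_2\cup K_{1,n-1}$ --- and your $W$-side injectivity argument via Lemma \ref{B} is clean and correct.

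The one genuine soft spot is the finish of the $U$-side injectivity argument. Your setup is right: if $N(u_1)=N(u_2)=S$ with $|S|\ge 3$ by Lemma \ref{C}, then Lemma \ref{B} gives $N(w_a)\cap N(w_b)=\{u_1,u_2\}$ for distinct $w_a,w_b\in S$, so each $w\in S$ has neighbors outside $\{u_1,u_2\}$ that are private to it within $S$. But the two contradictions you then gesture at (``some $w\in S$ driven to degree $\le 2$'' or ``$cn(w_a,w^*)\ge 3$ for some $w^*\in W\setminus S$'') do not visibly follow from ``pushing the observation through,'' and neither is in fact the contradiction that arises. The clean finish from exactly your setup is: pick $u_3\in N(w_a)\setminus\{u_1,u_2\}$ and $u_4\in N(w_b)\setminus\{u_1,u_2\}$; these are distinct since $u_3\notin N(w_b)\ni u_4$. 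Any common neighbor of the triple $(u_1,u_3,u_4)$ lies in $N(u_1)=S$, yet no $w\in S$ is adjacent to both $u_3$ and $u_4$ (if $w=w_a$ then $u_4\in N(w_a)\cap N(w_b)=\{u_1,u_2\}$; if $w\ne w_a$ then $u_3\in N(w)\cap N(w_a)=\{u_1,u_2\}$). Hence $cn(u_1,u_3,u_4)=0$, contradicting Definition \ref{A}(i). Substitute this for the hand-wave and your proof is complete, and strictly more careful than the one in the paper.
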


\begin{proof}
	Let $G=(U\cup W,E)$ be a circular graph where $U= \{u_1,u_2,\ldots,u_i\}$ ve $W= \{w_1,w_2,\ldots,w_j\}$. Since the fact that the neighborhood graph is bipartite graph, $\mathcal{N}(G)$ is a bipartite graph of order $2(i+j)$ whose vertex partitions $U \cup W= \{u_1,\ldots,u_i,w_1, \ldots,w_j\}$ and  $N(U)\cup N(W)= \{N(u_1 ),\ldots,N(u_i ),N(w_1),\ldots,N(w_j)\}$. For any $x\in U$ and $y\in N(U)$, the vertices $x$ and $y$ are not adjacent in $\mathcal{N}$ since $N(U)\subseteq W$. Similarly, $x\not\sim y$ for   $x\in W$ and $y\in N(W)$. Thus  $\mathcal{N}(G)$ is a disconnected graph whose components are  $G_1=(U \cup N(W),E_1)$ and  $G_2= (W\cup N(U),E_2)$ where $E_1 = \{xy:x \in U$, $y \in N(W) \}$ and $E_2= \{ xy:x \in W$ , $y\in N(U)\}$.\\
	 Now, let $(u_m,u_n,u_t)$ be any triple for $u_m,u_n,u_t \in U$. Then we have  $CN(u_m,u_n,u_t)= \{w_j \}$ in $G$. By identifying  $w_j$ by $N(w_j)$, we get $CN(u_m,u_n,u_t)=\{N(w_j)\}$ in $G_1$ because $ \{u_m,u_n,u_t \}\subseteq N(w_j)$. Hence  we get the neighbourhood of any vertex in $G$ and $G_1$ are the same by taking $N(w_j)=w_j$. So $G_1\cong G$. Similarly, it is easy to see that $G_2\cong G$. Therefore
	$ \mathcal{N}(G)\cong G\cup G$.
\end{proof}
	
	\begin{theorem}
		Let $G = (U\cup W,E)$ be a non-trivial circular graph. $G'=(U'\cup W', E')$ is a linear graph with  $u'w' \in E'$ for $u'\in U'$, $w'\in W'$ where $U'=U-\{u\}$ and $W'=W-S$ such that $S=\{x:x\not\in N(u)\}$ for any vertex $u$ in $U$.
	\end{theorem}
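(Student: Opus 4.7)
The plan is to verify the two defining properties of a linear graph recalled in the introduction: (a) every pair of distinct points in $U'$ has exactly one common neighbour in $W'$, and (b) $\delta(G')\geq 2$. Note first that $W-S$ is precisely $N(u)$ (since the neighbours of $u$ lie in $W$), so $G'$ is simply the subgraph of $G$ induced on $(U\setminus\{u\})\cup N(u)$; in particular $G'$ is bipartite with point part $U'$ and line part $W'$.

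For (a), I would take distinct $u_1,u_2\in U'$ and apply Definition \ref{A}(i) to the triple $(u,u_1,u_2)\subseteq U$: there is a unique $w_0\in W$ with $\{u,u_1,u_2\}\subseteq N_G(w_0)$. Since $w_0\in N(u)$, we have $w_0\in W'$, so $w_0$ is a common neighbour of $u_1,u_2$ in $G'$. For uniqueness, any candidate $w'\in W'$ adjacent in $G'$ to both $u_1$ and $u_2$ satisfies $u\in N_G(w')$ by the very definition of $W'$, so $\{u,u_1,u_2\}\subseteq N_G(w')$, and the uniqueness clause of Definition \ref{A}(i) forces $w'=w_0$.

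For (b), the easier half handles a vertex $w'\in W'$: since $u\in N_G(w')$ is the only deleted neighbour, $\deg_{G'}(w')=\deg_G(w')-1\geq 2$ by Definition \ref{A}(ii). The harder half, which I expect to be the main obstacle, is showing $\deg_{G'}(u_1)\geq 2$ for every $u_1\in U'$; equivalently, $cn_G(u,u_1)\geq 2$. I would argue by contradiction. First I observe that non-triviality forces $|U|\geq 3$: any $w\in W$ has all its neighbours in $U$ and satisfies $\deg_G(w)\geq 3$. Thus some $u_2\in U\setminus\{u,u_1\}$ exists, and Definition \ref{A}(i) applied to $(u,u_1,u_2)$ yields at least one common neighbour $w_0$ of $u$ and $u_1$. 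If $w_0$ were the only one, then for every $u_2'\in U\setminus\{u,u_1\}$ the uniqueness in Definition \ref{A}(i) would pin the witness for $(u,u_1,u_2')$ to $w_0$, giving $U\subseteq N_G(w_0)$. Invoking the other half of non-triviality, $|W|\geq 2$, I then pick $w'\in W\setminus\{w_0\}$; Definition \ref{A}(ii) hands me a triple $\{u_a,u_b,u_c\}\subseteq N_G(w')$, and since $\{u_a,u_b,u_c\}\subseteq N_G(w_0)$ as well, the pair $w_0,w'$ gives $cn(u_a,u_b,u_c)\geq 2$, contradicting Definition \ref{A}(i). Hence $cn_G(u,u_1)\geq 2$.

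The delicate point is that both halves of the non-triviality hypothesis are genuinely used in this last step: $|U|\geq 3$ to form the triple containing $u$ and $u_1$ in the first place, and $|W|\geq 2$ to produce the competing witness $w'$ that yields the contradiction. Combining (a) with the two degree bounds in (b) shows that $G'$ meets the definition of a linear graph recalled from \cite{konuralpjournalmath701085}, completing the argument.
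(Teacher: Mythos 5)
Your proposal is correct, and it follows the same overall outline as the paper --- identify $W'$ with $N(u)$, view $G'$ as the induced subgraph on $(U\setminus\{u\})\cup N(u)$, and verify the two axioms of a linear graph --- but it is substantially more careful on the one point where the paper's own proof is thin. For the common-neighbour condition, your argument (apply Definition \ref{A}(i) to the triple $(u,u_1,u_2)$ and use that every $w'\in W'$ is by construction adjacent to $u$, so any common neighbour of $u_1,u_2$ in $G'$ is automatically a common neighbour of the whole triple in $G$) is a cleaner version of what the paper does. The genuine divergence is the bound $\deg_{G'}(u_1)\ge 2$ for $u_1\in U'$: the paper disposes of it with the single sentence that ``since each pair $(q,r)$ has exactly one common neighbor, $d_{u'}\ge 2$,'' which does not follow as stated --- two different pairs containing $u_1$ could a priori share the same unique witness, and Lemma \ref{C} only bounds the degree of $u_1$ in $G$, not the number of its neighbours lying inside $N(u)$. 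Your contradiction argument (if $cn_G(u,u_1)=1$ with sole witness $w_0$, then uniqueness in Definition \ref{A}(i) forces $U\subseteq N(w_0)$, and any second vertex $w'\in W$ would then supply a triple with two common neighbours, violating condition (i)) closes this gap and makes explicit that both halves of non-triviality --- $|U|\ge 2$ (hence in fact $|U|\ge 3$) and $|W|\ge 2$ --- are actually used. So your write-up is, if anything, a repair of the published proof rather than a departure from its strategy.
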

	
	\begin{proof}
		Let $G = (U\cup W,E)$ be a non-trivial circular graph and let $(u,q,r)$ be any triple such that $ CN(u,q,r)=\{w\}$ for $u,q,r\ U$. and $w\in W$. Then there is only one $w'\in W'$ such that $N(w')= N(w)- \{u\}= \{q,r\}$. So we get  $CN(q,r)=\{ w'\}$ in $G'$. Hence, this fact holds the first condition of linear graph. Since each pair $(q,r)$ has exactly one common neighbor, $d_{u'}\geq 2$ for $u'\in U'$. On the other hand, we have $d_{w'}\geq 2$ for $w'\in N(u)$ if we  delete the vertices in $W$ which are not in the neighborhood of $u$. Hence we get $\delta(G')\geq 2$. This also holds the second condition in the definition of linear graph.   
		 
	\end{proof}

\begin{figure}
	\centering
	\includegraphics[width=\textwidth]{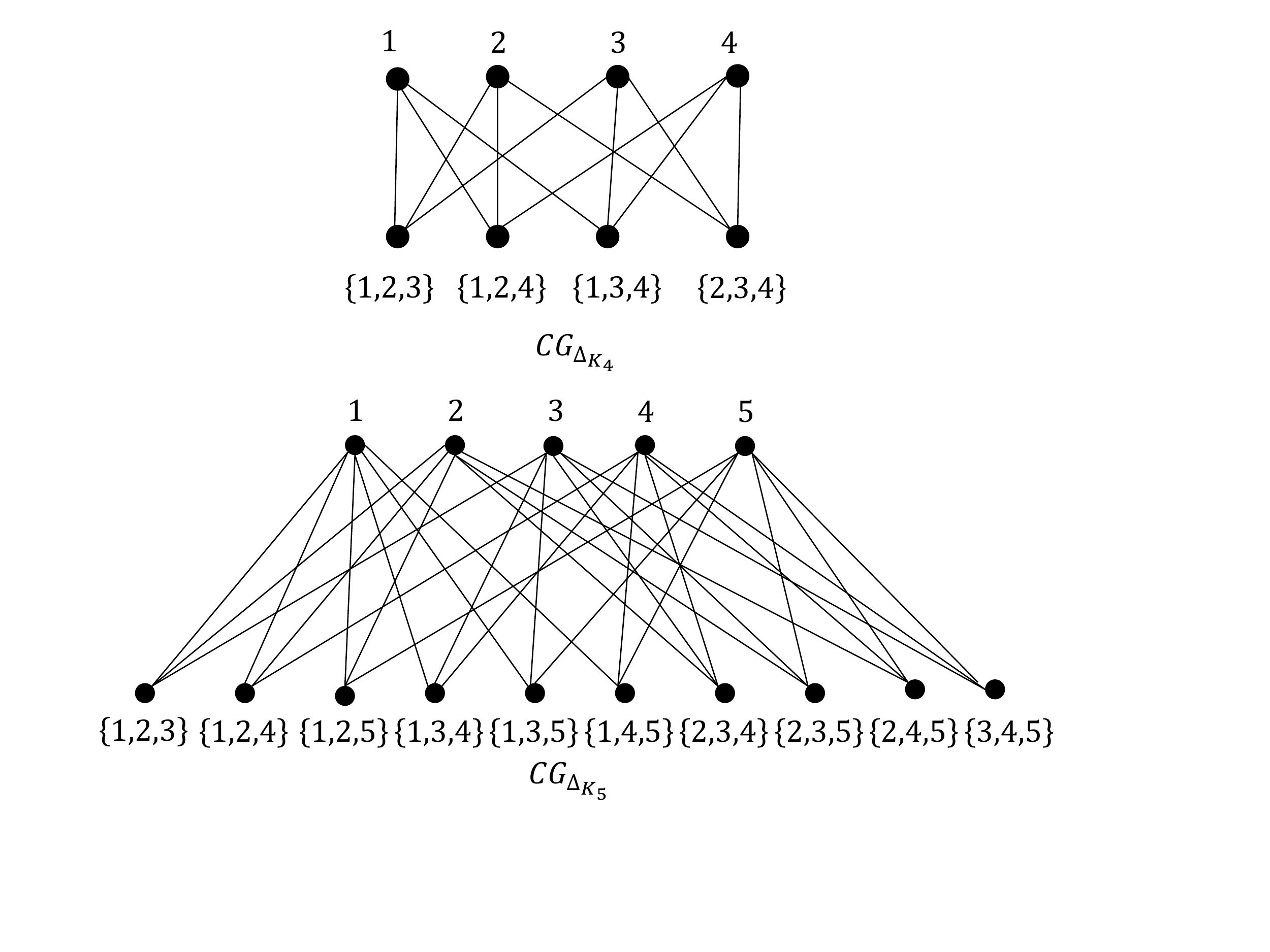}
	\caption{}
\end{figure}
	The following proposition gives a construction of circular graph (see Figure 2).
	\begin{proposition} \label{R}
		Let $K_n$ be a complete graph of order $n \geq 3$. Consider $U=V(K_n)$ and $W=\{ \{a,b,c\}: \text{different triangles in}\hspace{1mm} K_{n}\}$. Then $G = (U\cup W,E)$  is a circular graph with $uw\in E$ for $u\in U$, $w\in W$ and  $u\in N(w)$.  
	\end{proposition}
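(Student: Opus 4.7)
The plan is to verify directly the two defining conditions of a circular graph from Definition~\ref{A}, using the fact that $W$ is defined as the collection of $3$-element subsets (triangles) of the vertex set of $K_n$. Since the construction is essentially tautological once one unpacks the definitions, no auxiliary lemmas are needed; the proof is a matter of matching the combinatorial data.

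First I would check condition (i), namely $cn(u_i,u_j,u_k)=1$ for every triple in $U$. Given any three distinct vertices $u_i,u_j,u_k\in V(K_n)=U$, the set $\{u_i,u_j,u_k\}$ is itself a triangle of $K_n$ (since $K_n$ is complete and $n\ge 3$), hence it is an element of $W$ and, by the adjacency rule, is adjacent in $G$ to each of $u_i,u_j,u_k$. Conversely, any $w\in W$ that lies in $CN(u_i,u_j,u_k)$ must contain $u_i$, $u_j$, and $u_k$ as elements; but $|w|=3$ forces $w=\{u_i,u_j,u_k\}$. Thus the common neighbour is unique.

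Next I would verify condition (ii), namely $d(w)\ge 3$ for every $w\in W$. By construction each $w\in W$ corresponds to a triangle $\{a,b,c\}$, and by the adjacency rule its neighbours in $U$ are precisely $a$, $b$, and $c$. Hence $d(w)=3$, and in particular $d(w)\ge 3$.

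Since $G$ is bipartite by construction with parts $U$ and $W$, both defining conditions of Definition~\ref{A} hold, and therefore $G$ is a circular graph. The only mild subtlety to flag is the need $n\ge 3$ so that $W\ne\emptyset$ and condition (i) is non-vacuous; this is already part of the hypothesis. I do not anticipate any real obstacle in this proof.
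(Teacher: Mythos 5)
Your proof is correct and follows essentially the same route as the paper's: both verify condition (i) by noting that each triple of vertices of $K_n$ determines exactly one triangle in $W$, and condition (ii) by observing that each triangle has exactly three neighbours in $U$. Your version is in fact slightly more careful, since you explicitly argue the uniqueness direction (that $|w|=3$ forces any common neighbour of the triple to equal $\{u_i,u_j,u_k\}$), which the paper leaves implicit.
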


	\begin{proof}
		Let $K_n$ be a complete graph. Then there is $\binom{n}{3}$- different triangles. Then  any $u_i,u_j,u_k$ vertices in $V(K_n)$ form a triangle in $W$. Hence, there is only one $w$ in $W$ such that $CN(u_i,u_j,u_k )=\{w\}$. That's, this satisfies the condition of (i) in Definition \ref{A}. Also, $d(w)=3$  for $w \in W$.  Hence this also holds Definition \ref{A} (ii.).  
	\end{proof}

	\begin{remark}
		The circular graph, which is constructed in Proposition \ref{R}, can be defined as triangular circular graph and denoted by $CG_{\Delta_{K_n}}$. Also, $\mathcal{N}(K_4)\cong CG_{\Delta_{K_4}} $
	\end{remark}

\end{document}